\documentclass[12pt,reqno]{article}

\usepackage[utf8]{inputenc}
\usepackage{amsmath}
\usepackage{amssymb}
\usepackage{amsthm}
\usepackage{verbatim}
\usepackage[affil-it]{authblk}
\usepackage{mathtools}
\usepackage{hyperref}

\linespread{1.4} \setlength{\rightmargin}{-8cm}
\setlength{\textwidth}{16.5cm} \setlength{\textheight}{24.5cm}
\setlength{\oddsidemargin}{-0.5cm} \setlength{\evensidemargin}{-0.5cm}
\setlength{\topmargin}{-2cm}
\setlength{\footskip}{1cm}
\parindent=0in
\parskip=10pt

\title{The Repeated Divisor Function and Possible Correlation with Highly Composite Numbers}
\author{Sayak Chakrabarty%
  \thanks{Electronic address: \texttt{sayakc@cmi.ac.in}; }}
\affil{Chennai Mathematical Institute\\ Mathematics and Computer Science}

\author{Arghya Datta%
  \thanks{Electronic address: \texttt{arghyad@cmi.ac.in}}}
\affil{Chennai Mathematical Institute\\ Mathematics and Theoretical Physics}

\begin{document}
\maketitle
\hspace{40 mm} DEDICATED TO R.BALASUBRAMANIAN
\theoremstyle{plain}
\newtheorem{theorem}{Theorem}
\newtheorem{corollary}[theorem]{Corollary}
\newtheorem{lemma}[theorem]{Lemma}
\newtheorem{proposition}[theorem]{Proposition}
\theoremstyle{definition}
\newtheorem{definition}[theorem]{Definition}
\newtheorem{example}[theorem]{Example}
\newtheorem{conjecture}{Conjecture}
\theoremstyle{remark}
\newtheorem{remark}[theorem]{Remark}

\begin{abstract}
Let $n$ be a positive integer and $d(n)$ be the number of positive divisors of $n$. Let us call $d(n)$ the divisor function. Of course, $d(n) \leq n$. $d(n) = 1$ if and only if $n = 1$. For $n > 2$ we have $d(n) \geq 2$ and in this paper we try to find the smallest $k$ such that $d(d(\cdots(n)\cdots)) = 2$ where the divisor function is applied $k$ times, and we define $k$ to be the period for the number $n$. At the end of the paper we make a conjecture based on some observations.
\end{abstract}

\section{Introduction}
We found this problem in a paper by Florentin Smarandache, and for further information see [3]. This is the 18th unsolved problem in his paper.\\
We start with some trivial observations. $d(d(\cdots(n)\cdots)) = 2$ implies $d^{k-1}(n) = p$ where $p$ is a prime. If $p=2$ then the chain continues infinitely long without any significance.\\
Otherwise suppose $p$ is odd, $p=2\alpha + 1$. We know that only perfect squares have odd number of factors and since that odd number $2\alpha + 1$ is prime the only choice for the perfect square is $q^{2\alpha}$ where $q$ is a prime. Now  $q$ can be arbitrarily large.\\
Going one more step backwards, we see that a number having $q^{2\alpha}$ divisors will be of the form  $M=\prod_{i=1}^{m} p_{i}^{q^{a_i}-1}$ for some $1\leq m \leq 2\alpha$. Here $p_{i}$ are distinct primes and $\sum_{i}a_{i}=2\alpha$. Though we can fix $\alpha$, $M$ can be arbitrarily large since $q$ can be arbitrarily large.\\

\section{The possible values of period for a given integer }
From introduction we clearly observe that any given integer $n$ can be arbitrarily large while it's period, $k=3$ remains fixed and we get $d^{3}(n) = 2$ at the end. But computer programming reveals that if we plot $k$ with respect with $n$, the frequency with which $k=3$ or $k=4$ occurs is far above than any other frequency for at least  up to numbers like $5000000$. $k=5$ first occurs at 60 and $k=6$ first occurs at 5040. $k=7$ first occurs when $n = 2^{6}$ x $3^{4}$ x $5^{2}$ x $7{2}$ x $11$ x $13$ x $17$ x $19$ which is more than 10 digit number. We observe that $k$ increases very slowly compared to $n$. But what is interesting is that $k=3$ or $k=4$ occurs with same frequency almost in every sufficiently large interval. $k=1$ also sometimes occurs  due to the distribution of primes and the presence of twin primes. \\
But we can clearly see here that $k$ attains every integer $m \in \mathbb{N}$. Observe that \\
given $n = \prod_{i = 1}^{m} p_{i}^{a_{i}}$ and $k=r$ we just construct $n_{1}$ such that $d(n_{1}) = n$, then for $n_{1}$ we have $k= r+1$.
Just put $n_{1} = \prod_{i}^{m} q_{i}^{p_{i}^{a_{i}} - 1}$ where $q_{j}$ is the $j^{th}$ prime starting from 2. So $k$ is unbounded.

\section{The least integer for a given period }
After the previous section, here we give a theorem  which will allow us to give the smallest $n_{1}$ for which $k = r+1$ from a given integer $n$ with its known period $r$. Since we know that $60$ is  the smallest number where $k=5$ the first time, by induction we can consequently find the $n_{1}'s$ for which $k= 6,7,8 \ldots$.\\ 
Look at the following image on the next page to get an idea of the variation of $k$ with respect to $n$ when $n$ is taken in the range $(0,350)$. We plot the $n$ along the $x$ axis and the corresponding $k$ along the $y$ axis.

\newpage

\begin{theorem} 
Given an integer $n$ with it's prime decomposition, $n=p_{1}^{a_{1}}.p_{2}^{a_{2}}\cdots p_{m}^{a_{m}}$. Suppose further that period for $n$ is $k$ . If  $L$  the smallest integer whose period is $(k+1)$ then prime factorization of $L$  is given by 
$L= \bigg(2^{p_{m}- 1}.3^{p_{m}- 1} \cdots p_{a_{m}}^{p_{m}- 1}\bigg) \bigg(p_{a_{m}+ 1}^{p_{m-1}- 1}.p_{a_{m}+ 2}^{p_{m-1}- 1} \cdots p_{a_{m}+ a_{m-1}}^{p_{m-1}- 1}\bigg) \bigg(p_{a_{m}+a_{m-1}+ 1}^{p_{m-2}- 1} \cdots \bigg) $.

\end{theorem}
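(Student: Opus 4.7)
The plan is to reduce the problem in two stages and then invoke a splitting argument in the spirit of the classical study of highly composite numbers.

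\emph{Reduction.} Since $L$ has period $k+1$ exactly when $d(L)$ has period $k$, and since (I would argue) the smallest $L$ with $d(L)$ equal to a prescribed value $n'$ grows with $n'$ in the relevant regime, the smallest period-$(k+1)$ integer $L$ should satisfy $d(L) = n$, where $n$ is the smallest period-$k$ integer supplied by the hypothesis. The task becomes: minimise $L$ subject to $d(L) = n = p_1^{a_1}\cdots p_m^{a_m}$.

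\emph{Parametrisation.} Write $L = q_1^{b_1} q_2^{b_2} \cdots q_s^{b_s}$ with primes $q_1 < q_2 < \cdots < q_s$. A rearrangement inequality argument shows that for the minimum one may take $q_1,\ldots,q_s$ to be the first $s$ primes and the $b_i$ to be arranged in non-increasing order (larger exponent on smaller prime). The divisor constraint reads
\[
(b_1+1)(b_2+1)\cdots(b_s+1) \;=\; p_1^{a_1} p_2^{a_2} \cdots p_m^{a_m}.
\]

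\emph{Splitting.} The key structural claim is that each $b_i+1$ is prime in the minimum. If instead $b_i+1 = uv$ with $u,v \geq 2$, then the factor $q_i^{uv-1}$ can be replaced by $q_i^{u-1} \cdot q_{\mathrm{new}}^{v-1}$ (introducing the next unused prime), preserving the divisor count and strictly decreasing $L$ whenever $q_{\mathrm{new}} < q_i^{u}$, since the ratio of new to old contribution is $(q_{\mathrm{new}}/q_i^{u})^{v-1}$. Iterating, the multiset $\{b_i+1\}$ becomes exactly the multiset of prime factors of $n$, with $p_j$ appearing $a_j$ times. Finally, pairing the largest exponent $p_m-1$ with the $a_m$ smallest primes $2,3,\ldots$, then $p_{m-1}-1$ with the next $a_{m-1}$ primes, and so on down to $p_1-1$, is forced by the non-increasing ordering of exponents and produces exactly the factorisation in the theorem.

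\emph{Main obstacle.} The delicate point is the splitting step in corner cases where $q_i$ is small and many small primes are already in use, so the inequality $q_{\mathrm{new}} < q_i^{u}$ could in principle fail. The cleanest way I see to handle this is an induction on $\Omega(n) = \sum_j a_j$: assume the optimum is known for $n/p$ (with $p$ the largest prime factor of $n$) and verify that prepending a new block of $a_m$ copies of the exponent $p_m-1$ on the smallest available primes both satisfies the divisor constraint and beats any competing distribution, using the inequality $q_{\mathrm{new}} < q_i^{u}$ precisely at the step a fresh prime is introduced. The minor monotonicity fact used in the reduction step (smallest preimage under $d$ is monotone in the target) would also need a short separate justification.
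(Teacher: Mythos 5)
Your strategy coincides with the paper's: reduce to minimising $L$ subject to $d(L)=n$, argue that composite factors of the divisor count should be split across fresh primes, and then place the largest exponents on the smallest primes. The splitting inequality you isolate, $q_{\mathrm{new}} < q_i^{u}$, is exactly the engine of the paper's induction, which reduces the comparison $2^{p_m^{k+1}-1} > 2^{p_m^{k}-1}\, p_{a_m}^{p_m-1}$ to $2^{p_m^{k}} > p_{a_m}$ and settles it with $p_n \le 2^n$. Your multiset formulation is, if anything, tidier: the paper only compares the two extreme configurations (all of $p_j^{a_j}$ concentrated on one prime versus fully split into $a_j$ blocks of exponent $p_j-1$) and never rules out intermediate factorisations $\prod_i (b_i+1) = n$ in which a single $b_i+1$ mixes prime factors coming from different $p_j$'s; your iterated splitting handles all of these uniformly, provided the inequality holds at each step.

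The two obstacles you flag are genuine, and neither is addressed in the paper. First, the reduction: the minimal preimage under $d$ is \emph{not} monotone in the target (the smallest $L$ with $d(L)=13$ is $2^{12}=4096$, while the smallest $L$ with $d(L)=16$ is $120$), so the claim that the smallest period-$(k+1)$ integer must be a $d$-preimage of the smallest period-$k$ integer requires an argument restricted to targets of equal period, and it is not obvious; the paper simply assumes it. Second, the corner case of the splitting inequality: when $q_i$ is not $2$ and several small primes are already occupied, $q_{\mathrm{new}} < q_i^{u}$ needs checking; the paper's induction quietly takes the block to start at $2$ and merely remarks that for an inner block starting at a later prime ``the argument will be similar'' without carrying it out. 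Your proposed induction on $\Omega(n)$ is a reasonable way to organise the repair, but as written both your sketch and the paper's proof leave these two steps open.
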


\begin{proof}
First of all, assume $L=p_{1}^{b_{1}}.p_{2}^{b_{2}}\cdots p_{\alpha}^{a_{\alpha}}$. Here, we again mention that $(p_i)_{i\geq 1}$  is an enumeration of primes in increasing order , i.e. $p_1=2,p_2=3\ldots$ and so on ! So in particular those $a_i$ or $b_j$'s could be $0$ as well, in case their corresponding prime is absent in the decomposition.

\textbf{Case 1: $a_{i} = 1$ for some $i$}\\
In order to construct the minimum $L$, we need to make sure that the largest prime should be put as the index on the smallest possible prime. So if $a_{i} = 1$ for some $i$, clearly it goes to power of single prime because if $a_{m} = 1$ without loss of generality, then $b_{1} = p-1$ because otherwise, $L$ will not me minimal.\\

\textbf{Case 2: $a_{i} \geq 2$ for some $i$}\\
Here we say that for a generic term in  prime decomposition say $p_{j}^{a_{j}}$, it can be distributed like $2^{p_{j}^{a_{j}} - 1}$ or $2^{p_{j} - 1}.3^{p_{j} - 1}\cdots p_{a_{j}}^{p_{j} - 1}$ two ways.We will prove that to achieve the minimal $L$, the second choice is better. Similarly, we can argue $3^{p_{j}^{a_{j}} - 1} > 3^{p_{j} - 1}\cdots p_{a_{j+1}}^{p_{j} - 1}$. This will lead to the conclusion that each generic coupe, say without loss of generality $p_{m}^{a_{m}}$ will give $(2^{p_{m} - 1}.3^{p_{m} - 1}\cdots p_{a_{m}}^{p_{m} - 1})$ contribution in the prime factorization of $L$.\\
Now, We will use induction on $a_{j}$.\\
For $a_{j} = 2$, without loss of generality let $j=m$. If $j = k(<m)$ then instead of 2, our decomposition will start with $p_{a_{m}+ a_{m-1} + \cdots+ a_{k-1}+ 1}$ and argument for that will be similar.\\
If $a_{j} = 2$ we have to show:
\begin{eqnarray}
2^{p_{m}^{2}- 1} > 2^{p_{m} -1}.3^{p_{m} -1}\\
\implies 2^{p_{m}} > 3
\end{eqnarray}
\textbf{Induction Step:} Assuming $a_{m} = k$ we will prove for $a_{m} = k+1$\\
$2^{p_{m}^{k+1} - 1} > (2^{p_{m} - 1}. 3^{p_{m} - 1} \cdots p_{a_{m} - 1}^{p_{m} - 1})(p_{a_{m}}^{p_{m} - 1})$\\

Now $(2^{p_{m} - 1}.3^{p_{m} - 1} \cdots p_{a_{m} - 1}^{p_{m} - 1}) < 2^{p_{m}^{k}- 1}$ by the hypothesis.\\
So it is enough to check if \\
\begin{eqnarray}
2^{p_{m}^{k+1} - 1} > 2^{p_{m}^{k} - 1} . p_{a_{m}}^{p_{m}- 1}\\
\implies 2^{p_{m}^{k+1} - p_{m}^{k}} > p_{a_{m}}^{p_{m} - 1}\\
\implies 2^{p_{m}^{k}} > p_{a_{m}}
\end{eqnarray}
Now it is clearly true that $p_{n} \leq 2^{n}$ and so enough to show\\
$2^{p_{m}^{k}} \geq 2^{k+1}$. But clearly $p_{m}^{k} > k+1$, and so we are done.\\
\end{proof}
\textbf{Example 1.}
If we put $n = 5040$ then we get $L = 2^{6}.3^{4}.5^{2}.7^{2}.11.13.17.19$ which is a 13 digit number. Observe how we use the theorem.\\
$5040 = 2^{4}.3^{2}.5.7$. So according to our theorem since 5 and 7 have index 1, they will correspond to a single prime number each. We have to construct $L$ such that $d(L) = 5040$. So the prime factorization of $L$ will begin with $2^{6}.3^{4}$ for sure. Now to get $3^{2}$ as a factor of $d(L)$ we need to distribute it in such a way that our obtained $L$ is minimum.\\ 
So we have $ L = 2^{6}.3^{4}.5^{2}.7^{2} \cdots$ and by similar reasoning we finish the construction of $L$ as $L = 2^{6}.3^{4}.5^{2}.7^{2}.11.13.17.19$.\\
It is noticeable that the theorem shows it is always better to distribute the indexes over as many primes as possible to minimize the outcome.

\section{An estimated behavior of period for a given integer}
\begin{theorem}
Given any integer $n$ it's period has size $O(\log n)$ 
\end{theorem}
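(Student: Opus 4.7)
The plan is to exploit the elementary bound $d(n) \leq 2\sqrt{n}$, which alone suffices to give $O(\log n)$ and in fact yields the stronger $O(\log \log n)$. First I would establish this bound by the standard pairing argument: each divisor $d$ of $n$ is paired with its conjugate $n/d$, and in every such pair the smaller element is at most $\sqrt{n}$, so $d(n) \leq 2\lfloor\sqrt{n}\rfloor \leq 2\sqrt{n}$.

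Next I would consider the iterated sequence $n_0 = n$, $n_{j+1} = d(n_j)$ and observe that as soon as $n_j \geq 16$ one has $2\sqrt{n_j} \leq n_j/2$, so the sequence is at least halved at each step until it drops below $16$. This gives at most $\lceil \log_2 n\rceil$ such iterations, and from any value below $16$ only a bounded number of further applications of $d$ are needed to reach $2$, since $d(m) < m$ for every $m > 2$. Adding the two contributions yields $k(n) \leq \log_2 n + C$ for an absolute constant $C$, which is exactly the claimed $O(\log n)$ bound.

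For a sharper conclusion one could instead take logarithms in $n_{j+1}\leq 2\sqrt{n_j}$ to get $\log n_{j+1} \leq \tfrac{1}{2}\log n_j + \log 2$; summing the resulting geometric series gives $\log n_j \leq 2^{-j}\log n + 2\log 2$, so only $O(\log \log n)$ iterations suffice before $n_j$ falls below a universal constant. Either way the theorem follows immediately.

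The main thing to watch is the handoff between the fast-contraction regime $n_j \geq 16$ and the finite cleanup steps near the base of the recursion where the contraction is merely $d(m) < m$; but since the cleanup phase contributes only $O(1)$ steps, it cannot affect the asymptotic rate. I do not anticipate any serious obstacle: the entire argument rests on the elementary pairing bound $d(n) \leq 2\sqrt{n}$ together with a standard geometric iteration.
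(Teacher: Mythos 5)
Your proof is correct, but it takes a genuinely different and substantially more elementary route than the paper. The paper argues backwards: it invokes Theorem 1 to construct, for a given $n$, the least preimage $n_{1}$ with $d(n_{1})=n$, and then uses asymptotics for $p_{n}$ and for the product of the first $n$ primes to show $\log n_{1}-\log n$ is bounded below by a positive constant times $\nu(n)$, concluding that the least integer of period $k$ grows at least exponentially in $k$. That argument depends on the structure theorem for minimal preimages and on several asymptotic estimates handled somewhat loosely. You instead run the iteration forward from $n$ and use only the pairing bound $d(m)\le 2\sqrt{m}$, which gives $\log n_{j+1}\le \tfrac12\log n_{j}+\log 2$ and hence geometric decay of $\log n_{j}$ down to an absolute constant, with an $O(1)$ cleanup phase justified by $d(m)<m$ for $m>2$. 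Every step of this is rigorous and self-contained. Moreover, your sharper variant yields $k=O(\log\log n)$ for \emph{all} $n$, which is stronger than the paper's Theorem 2 and also subsumes its Theorem 3 (which obtains $O(\log\log n)$ only for sufficiently large $n$ and only by appealing to Wigert's theorem on $\limsup \log d(n)\log\log n/\log n$). What your approach does not give, and what the paper's backward construction is really after, is quantitative information about the \emph{least} integer $n_{k}$ of each period $k$; but for the statement as posed, your argument is complete and preferable.
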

\begin{proof}
Constructing $n_{1}$ from $n$ according to Theorem 1, we see that if $n$ has prime decomposition of the form $p_{1}^{a_{1}}.p_{2}^{a_{2}}\cdots p_{m}^{a_{m}}$ then the same for $n_{1}$ will be as follows:\\
$n_{1} = \bigg(2^{p_{m}- 1} 3^{p_{m}- 1} \cdots p_{a_{m}}^{p_{m}- 1}\bigg) \bigg(p_{a_{m}+ 1}^{p_{m-1}- 1} p_{a_{m}+ 2}^{p_{m-1}- 1} \cdots p_{a_{m}+ a_{m-1}}^{p_{m-1}- 1}\bigg) \bigg(p_{a_{m}+a_{m-1}+ 1}^{p_{m-2}- 1} \cdots \bigg)\cdots $\\

So $\log n = \sum_{i= 1}^{m} a_{i} \log p_{i}$ and also\\
$\log n_{1} = (p_{m}-1)\log [2.3 \cdots p_{a_{m}}] + (p_{m}-1) \log (p_{a_{m}+1} \cdots p_{a_{m}+a_{m-1}}) + \cdots$\\

Now we will use a well known fact that product of first $n$ primes is asymptotically $e^{n\log n}$. Using this above result changes the above equation as follows:\\

$\log n_{1} = (p_{m}-1)a_{m}\log a_{m} + (p_{m-1}-1)\bigg[(a_{m} + a_{m-1})\log (a_{m}+ a_{m-1}) - a_{m}\log a_{m}\bigg] + (p_{m-2}-1)\bigg[(a_{m} + a_{m-1}+ a_{m-2})\log (a_{m}+ a_{m-1} + a_{m-2}) - (a_{m}+ a_{m-1}\log (a_{m} + a_{m-1})\bigg] + \cdots$\\

Now to compare $\log n_{1}$ to $\log n$ we will investigate the increment for each $a_{i}$'s. We have to begin with the coefficient for $a_{m}$ in $\log n_{1}$ .\\
Observe that $(p_{m}-1)a_{m}\log a_{m}$ serves as the main term since except this term, others involve decreasing functions which can be arbitrarily small but all these terms are clearly non-negative.\\
This follows because \\
$a_{i} \geq 2$ and $\log (n+2) - \log n = \log (1+ \dfrac{2}{n}) \to 0$ as $n \to \infty $.\\
The assumption that $a_{i} \geq 2$ will be justified shortly.\\
So the main contribution is due to $(p_{m}-1)a_{m}\log a_{m}$. And similarly main term related to increment for the co-efficient of $a_{m-1}$ will be $(p_{m-1}-1)(a_{m} + a_{m-1})\log (a_{m}+ a_{m-1})$ which is greater than $(p_{m}-1)a_{m}\log a_{m}$. An interesting thing to observe is that the above inequality cannot be considerably made better since $a_{m}$ can be as small as 2 and $\log (n+2) \sim \log n$. So all we have got is the generic main term for increment corresponding to the co-efficient $a_{i}$ will be $p_{i}\log a_{i}$. \\
For measuring the increase from $\log n$ to $\log n_{1}$ we try to estimate the increase for each $a_{i}$. Now \\
$[(p_{m}-1)\log a_{m} - \log p_{m}] \sim [(p_{m}-1)\log 2 - \log p_{m}] \sim [m\log m \log 2 - \log m - \log\log m] $ (using $p_{n} \sim n\log n$). \\
Now for the function \\
$ f(x) = x\log x \log 2 - \log x - \log\log x $ we seek to find its minimum and for that we solve for its derivative.\\
This clearly is the solution of the equation\\
$(\log 2) x(\log x)^{2} + (\log 2 x - 1)\log x = 1 $.\\
$\implies x = 0.130488$ or $2.39604$. \\ 

So from here we get that the minimum increase will be at-least\\
$(p_{m}-1)\log a_{m} - \log p_{m} \sim 2\log\log 2 - \log 2 - \log 2 \geq 0.634$.\\
So $a_{m}((p_{m}-1)\log a_{m} - \log p_{m}) \geq 2$ x  $0.634 = 1.268$
So evidently we have\\
$\log n_{1} - \log n \geq m. (1.26)$\\
$\implies \log_{10} n_{1} - \log_{10} n \geq 0.545$ $\nu(n)$\\where $\nu(n)$ is the number of distinct prime divisors of $n$. Since there are at least 2 distinct prime divisors with $a_{i} \geq 2$, we are done.

So by inductive argument we have the minimum size of $n$ for which $d^{k} (n) = 2$ occurs is at-least $10^{k}$.\\
Correspondingly $\forall$ $n$, $k$ has size $O(\log n)$.\\
\end{proof}

\begin{theorem}
For all sufficiently large integers $n$ its period has size $O(\log \log n)$ 
\end{theorem}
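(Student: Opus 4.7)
The plan is to strengthen Theorem~2 by invoking the classical inequality of Wigert,
\[
\log d(n) \leq (\log 2 + o(1))\,\frac{\log n}{\log \log n},
\]
valid for all sufficiently large $n$. Whereas the proof of Theorem~2 only extracted a constant multiplicative decrease of $\log n$ per application of $d$, Wigert's bound shows that a single application of $d$ divides $\log n$ by roughly $\log\log n$, and it is iterating this much sharper contraction that will deliver the $O(\log\log n)$ estimate.

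Concretely, I would set $n_0 = n$, $n_{k+1} = d(n_k)$, and $L_k = \log n_k$, so that Wigert gives $L_{k+1} \leq C\, L_k / \log L_k$ for some absolute constant $C > 0$ and every $k$ with $n_k$ above a fixed threshold. Taking one further logarithm and writing $\ell_k = \log L_k = \log\log n_k$ converts this into the additive recursion
\[
\ell_{k+1} \leq \ell_k - \log \ell_k + \log C.
\]
Hence as soon as $\ell_k$ exceeds a fixed threshold $\ell^*$ depending only on $C$, each step drops $\ell_k$ by at least $1$. I would then split the trajectory into two phases. In Phase~1, while $\ell_k > \ell^*$, each iteration knocks $\ell_k$ down by at least $1$; since $\ell_0 = \log\log n$, this phase lasts at most $\log\log n - \ell^* + O(1)$ steps, and at its end $n_k$ is bounded by some absolute constant $N^*$. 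In Phase~2, starting from $n_k \leq N^*$, only a constant number of further applications of $d$ are needed to reach $2$, since the period is a bounded function on $\{1,\ldots,N^*\}$. Summing the two contributions gives that the period of $n$ is at most $\log\log n + O(1)$, which is the claimed $O(\log\log n)$ bound.

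The main obstacle will be the standard bookkeeping required when iterating an asymptotic estimate: one must confirm that the $o(1)$ in Wigert's inequality can be absorbed into $C$ \emph{uniformly} throughout Phase~1 (not just at the first step), verify that $L_k$ stays large enough for $\log L_k$ to be of the right order of magnitude at every iteration, and choose $\ell^*$ so that the handoff to Phase~2 is clean. A minor additional point is that $d(m) = 2$ forces $m$ to be prime, so one must check that the Phase~2 trajectory does not stall at some small composite value; but since there are only finitely many starting values in $\{1,\ldots,N^*\}$, this just contributes an $O(1)$ term.
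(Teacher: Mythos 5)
Your proposal is correct and rests on the same key input as the paper --- Wigert's theorem applied iteratively along the chain $n, d(n), d^{2}(n),\ldots$ --- but the way you exploit it is genuinely different and strictly sharper. The paper deliberately weakens Wigert to a constant-factor contraction: once $\log\log n \geq \log 2\,(1+\epsilon)(1+c)$ it deduces $\log d(n) \leq \tfrac{1}{1+c}\log n$, iterates to $\log n \geq (1+c)^{k}\log 2$, and concludes $k = O(\log\log n)$ with implied constant $1/\log(1+c)$. You instead retain the full contraction $L_{k+1} \leq C L_k/\log L_k$ and pass to $\ell_k = \log\log n_k$, turning the iteration into the additive recursion $\ell_{k+1} \leq \ell_k - \log \ell_k + \log C$, which loses at least $1$ per step above a fixed threshold; this yields the stronger explicit bound $k \leq \log\log n + O(1)$ rather than an unspecified multiple of $\log\log n$. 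Your two-phase split also repairs a point the paper glosses over: its iteration down to $\log 2$ tacitly assumes the Wigert inequality remains applicable at every step of the chain, even though the iterates eventually fall below the threshold $N_0$, whereas your Phase~2 handles that finite tail explicitly as an $O(1)$ contribution (using that $d(m) < m$ for $m \geq 3$, so the period is a bounded function on $\{1,\ldots,N^{*}\}$). The bookkeeping you flag --- absorbing the $o(1)$ uniformly into $C$ and keeping $L_k$ large throughout Phase~1 --- is exactly the right thing to verify and is routine.
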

\begin{proof}
The bound for $k$ can be considerably improved for large $n$ using a well known result due to Wigart. See [4] for more information.\\

$\limsup\limits_{n}$  $\dfrac{\log d(n) \log\log n}{\log n} = \log 2$\\

which translates to: given $\epsilon > 0$, $\exists N_{0}$ such that $\forall n \geq N_{0}$ we have \\ 
\begin{eqnarray}
d(n) < n^{\frac{\log 2 (1 + \epsilon)}{\log\log n}}\\ 
\implies \log n > {\dfrac{\log\log n}{\log 2 (1 + \epsilon)}} \log d(n)
\end{eqnarray}

This clearly improves the bound on $k$. Assuming $d(n_{1}) = n$, we have to choose $n \geq max\big({N_{0},\dfrac{N_{1}}{10}}\big)$ where $N_{1}$ is the least integer such that $\log\log N_{1} \geq \log 2 (1+ \epsilon)(1+c)$\\

\begin{eqnarray}
\log n_{1} > {\dfrac{\log\log n_{1}}{\log 2 (1+\epsilon)}} \log d(n)\\
\implies \log n_{1} \geq (1+c) \log n
\end{eqnarray}
Here $c>0$ is a constant.\\
So we have by iteration $\log n_{1} \geq (1+c)^{k} \log 2$\\
So $k = O(\log\log n)$ for large enough n. \\
\end{proof}
We observe that :\\
k : 1   2   3    4  5    6 $\ldots$\\
n : 2   4   6  12   60  5040 $\ldots$\\  
Here given $k$ we have listed the least $n = n_{k}$ for which $d^{k}(n) = 2$. Now we make the following conjecture.\\

\begin{conjecture}
All the $n_{k}$'s which are produced by Theorem 1 are highly composite numbers. For a complete idea about what highly composite numbers are we refer to [1].\\
From a well known result(for more information about the source see [2]) we have:\\

$\max\limits_{n \leq x} d(n) = exp\bigg(\log2 \dfrac{\log x}{\log\log x} + O\big(\dfrac{\log x \log\log\log x}{(\log x)^{2}}\big)\bigg)$\\

So for large $n_{k}$ we expect that $\log n_{k-1} \sim \log 2 \dfrac{\log n_{k}}{\log\log n_{k}}$\\

$\max\limits_{n \leq n_{k}} d(n) = exp\bigg(\log2 \dfrac{\log n_{k}}{\log\log n_{k}} + O\big(\dfrac{\log n_{k} \log\log\log n_{k}}{(\log n_{k})^{2}}\big)\bigg)$\\

$\implies \max\limits_{n \leq n_{k}} d(n) \sim exp\big(\log 2 \dfrac{\log n_{k-1}}{\log 2}\big)$\\
$\implies \max\limits_{n \leq n_{k}} d(n) \sim n_{k-1}$
$\implies n_{k}$ is highly composite.
\end{conjecture}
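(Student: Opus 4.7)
The plan is to formalise the heuristic sketched alongside the conjecture, and then close the gap between asymptotic and exact extremality. I would proceed in three stages: first, establish rigorously the relation $\log n_{k-1} \sim \log 2 \cdot \log n_k / \log\log n_k$ from the recursive construction in Theorem 1; second, combine this with Wigert's maximal order estimate to show that $d(n_k) = n_{k-1}$ is asymptotically extremal for $d$ on $[1, n_k]$; and third, upgrade this asymptotic equality to the strict inequality $d(m) < d(n_k)$ for every $m < n_k$ required by the definition of a highly composite number.

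For the first stage, writing $n_{k-1} = \prod_{i=1}^m p_i^{a_i}$ and using the explicit factorisation of $n_k$ supplied by Theorem 1, I would express $\log n_k$ as a sum of blocks $(p_j - 1)(\vartheta(p_{T_j}) - \vartheta(p_{T_{j+1}}))$, where $T_j = \sum_{\ell \geq j} a_\ell$ and $\vartheta$ denotes Chebyshev's theta function. By the Prime Number Theorem one has $\vartheta(p_N) \sim N\log N$, and using also $p_m \sim m \log m$ together with the inductive bound $\Omega(n_{k-1}) = o(\log n_{k-1}/\log\log n_{k-1})$ (which can be checked from the recursion itself), the desired asymptotic should drop out. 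The algebra is delicate because the factor of $\log 2$ in the target relation appears only in the correct leading order and must be tracked carefully through the telescoping.

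The second stage is then almost immediate from Wigert's theorem, which yields $\max_{m \leq n_k} d(m) = \exp\!\bigl(\log 2 \cdot \log n_k / \log\log n_k \cdot (1 + o(1))\bigr)$; combining with the first stage gives $\log \max_{m \leq n_k} d(m) \sim \log d(n_k)$, so $n_k$ is asymptotically a maximiser of $d$ on $[1, n_k]$.

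The main obstacle lies in the third stage. Theorem 1 already forces $n_k$ to have the admissible shape $2^{b_1} 3^{b_2} \cdots p_r^{b_r}$ with $b_1 \geq b_2 \geq \cdots \geq b_r \geq 1$, which is Ramanujan's necessary structural condition for a highly composite number: the blocks produced by Theorem 1 have the constant value $p_j - 1$ on $a_j$ consecutive primes, and the blocks themselves strictly decrease in $j$. What remains is to verify that this specific exponent pattern also satisfies the sharper inequalities in Ramanujan's parameterisation via superior highly composite exponents, thereby ruling out any near-miss competitor $m < n_k$ with $d(m) \geq n_{k-1}$. The Wigert asymptotic by itself leaves a multiplicative factor of $\exp\!\bigl(O(\log n_k \log\log\log n_k / (\log\log n_k)^2)\bigr)$ of slack, so asymptotic extremality is not enough, and a direct combinatorial comparison of exponent vectors à la Ramanujan seems unavoidable. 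This structural comparison is, I expect, the principal technical difficulty in the conjecture.
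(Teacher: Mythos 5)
The statement you are trying to prove is not proved in the paper: it is stated as a \emph{conjecture}, and the displayed chain of implications accompanying it is a heuristic, not an argument. Your stages 1 and 2 retrace that heuristic (the relation $\log n_{k-1}\sim \log 2\,\log n_k/\log\log n_k$ followed by the maximal-order estimate for $d$), with the improvement that you propose to actually derive the first relation from the recursion via Chebyshev's $\vartheta$ rather than simply ``expecting'' it as the paper does. Be aware that this relation is itself essentially the assertion that $d(n_k)=n_{k-1}$ attains the maximal order of the divisor function below $n_k$ --- that is, it is very close to the statement being conjectured --- so your telescoping derivation must be checked carefully for circularity; in particular it implicitly requires that $n_{k-1}$ already has near-extremal multiplicative structure, which is the inductive content of the conjecture itself.

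Your stage 3 correctly names the genuine gap, which the paper's heuristic silently jumps over. The maximal-order formula controls only $\log\max_{m\le n_k}d(m)$ up to a relative error, so the conclusion $\max_{m\le n_k}d(m)\sim n_{k-1}$ is not legitimate as written (exponentiating an asymptotic relation between exponents leaves a multiplicative factor of size $\exp\bigl(O(\log n_k\log\log\log n_k/(\log\log n_k)^2)\bigr)$, which is unbounded), and even a true asymptotic equality $\max_{m\le n_k}d(m)\sim d(n_k)$ would not yield the exact inequality $d(m)<d(n_k)$ for all $m<n_k$ that the definition of a highly composite number demands. The combinatorial comparison of exponent vectors in the style of Ramanujan that you flag as the principal difficulty is indeed the missing step, and neither your proposal nor the paper supplies it. So your submission is an honest and accurate plan, but not a proof; the statement remains, as in the paper, a conjecture.
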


\section{Acknowledgment}
We immensely thank Florentin Smarandache(University of New Mexico) to raise this problem in his paper ``THIRTY-SIX UNSOLVED PROBLEMS IN NUMBER THEORY". He also motivated us saying that nothing is known about the solution. We remain highly obliged to Prof. Balasubramanian(Institute of Mathematical Sciences) for guiding us to solve this problem up to whatever extent we have done.

\end{document}